\documentclass{article}[12pt]

\usepackage{pstricks}
\usepackage{graphics}
\usepackage{caption}
\usepackage{amsmath}
\usepackage{amsthm}
\usepackage{amsfonts}
\usepackage{amssymb}
\usepackage{amscd}
\usepackage{calc}
\usepackage{graphicx}
\usepackage[arc, all]{xy}
\usepackage{color}
\usepackage{wasysym}
\usepackage{setspace}
\usepackage{subcaption}

\newtheorem{thm}{Theorem}

\newtheorem{lemma}{Lemma}
\newtheorem{prop}{Proposition}
\newtheorem{cor}{Corollary}

\newcommand{\Nb}{\mathbb{N}}

\title{Consecutive Cycle Sums}
\author{Jessica Marotti and Jonathan Needleman}

\begin{document}
	\maketitle
	\begin{abstract}
		Consider placing the numbers $1, \ldots, n$ consecutively around an $n$-cycle.  We ask for a given $n$, which numbers $k$ can be written as a sum of consecutive numbers on the $n$-cycle.  If all numbers between $1$ and $T_n$, the $n$th triangular number, can be written as a sum this way we say the $n$-cycle is complete.  Our main result shows that for $n\geq 8$, $n$-cycles are incomplete.
	\end{abstract}
	\section{Problem Statement}
	The question of which natural numbers can be written as a sum of two or more consecutive natural numbers is a classic problem.  In what is now known as Sylvester's Theorem (although many results are known by this name),  the only natural numbers which cannot be written as a sum of two or more consecutive natural numbers are of the form $2^n$ \cite{Sylvester}.  This paper looks at a modification of this classic problem.
	
	Consider  the numbers $1, \ldots , n$ placed sequentially along an $n$-cycle.   We want to know for which $n$ can every number, $1, \ldots , T_n$, be written as a sum of one or more consecutive numbers on the $n$-cycle, where $T_n=\sum_{i=1}^n i$ is the $n$th triangular number. When this happens we call the $n$-cycle complete. 
	
	For example, consider $n=5$ as in figure \ref{fig:circ5}.
	
	\begin{figure}[h!]
		\[
		\xy(0,18)*{1};(16,5)*{2}**[|.5pt]\dir{-};(8,-12)*{3}**[|0.5pt]\dir{-};
		(-8,-12)*{4}**[|0.5pt]\dir{-};(-16,5)*{5}**[|0.5pt]\dir{-};(0,18)*{}**[|0.5pt]\dir{-};
		\endxy\]
		\caption{$5$-cycle}\label{fig:circ5}
	\end{figure}
	
	Of interest is the value of $8$, which cannot be written as a sum of consecutive integers, but on this cycle it can as $8=5+1+2$.  In fact, one can check the $5$-cycle is complete.  However, the $6$-cycle is incomplete since both $8$ and $13$ are not the sum of consecutive values on the $6$-cycle.
	
	Checking small $n$, $6$ appears to be an anomaly as $1,2,3, 4,5 $ and $7$ are all complete.  However, these values are actually the anomalies, as the $n$-cycle is incomplete for $n\geq 8$. The goal of this paper is to prove this result, summarized in theorem \ref{thm:main}. 
	
	\begin{thm}\label{thm:main}
		For all $n\in\Nb$ with $n\geq 8$, the $n$-cycle is incomplete.
	\end{thm}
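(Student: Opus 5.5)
The plan is to turn completeness into a statement about sums of runs of consecutive integers, and then, for each $n\geq 8$, to exhibit an explicit $k\le T_n$ that is missed. An arc of the $n$-cycle either does not pass from $n$ back to $1$, in which case its sum is $\sum_{i=a}^{b} i$ with $1\le a\le b\le n$, or it does, in which case its sum is
\[
T_a + (T_n - T_{b-1}), \qquad 1\le a < b-1 \le n-1 .
\]
Equivalently, in the wrapping case $T_n-k$ is a run sum lying inside $[2,n-1]$. The $n$-cycle is therefore incomplete exactly when some $k$ satisfies two conditions. First, $k$ is not a sum of consecutive integers from $[1,n]$. Second, $T_n-k$ is not a sum of consecutive integers from $[2,n-1]$, and $k\neq T_n$.

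For the first condition I use Sylvester's theorem as recalled in the introduction. A power of two $2^j>n$ is not a sum of two or more consecutive naturals, and it is too large to be a single term. So any power of two in $(n,T_n)$ meets the first condition, and the problem reduces to showing that for some such $2^j$ the equation
\[
2^j = T_a + T_n - T_{b-1} \qquad (1)
\]
has no admissible solution.

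To analyze (1), I write $s=n-b+1$ for the length of the wrapped suffix, so the suffix sum is $sn - T_{s-1}$. I then look at the smallest candidates first. For $2^j$ in $(n,2n)$ only $s=1$ is possible, which reduces (1) to $2^j-n=T_a$. For $2^j$ in $[2n,3n)$ only $s\le 2$ is possible, giving $2^j - n = T_a$ or $2^j-2n+1=T_a$. In general, a fixed power $2^j$ lying between $(s-1)n$ and $sn$ gives only a handful of conditions of the form ``$2^j - (\text{linear in } n) $ is a triangular number.''

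I would therefore split according to the position of $n$ between consecutive powers of two. Write $2^{m}\le n<2^{m+1}$, with candidates $2^{m+1}$, $2^{m+2}$ and possibly $2^{m+3}$; all of these are below $T_n$ once $n\ge 8$. For each candidate, the forbidden values of $n$ are those for which one of finitely many quantities, such as $2^{m+1}-n$, $2^{m+2}-n$, $2^{m+2}-2n+1$, $2^{m+2}-3n+3$, and so on, is triangular. I then show that no $n\ge 8$ is forbidden for all candidates simultaneously. The tool is congruences: modulo $8$ or $16$, triangular numbers avoid certain residues, and shifting from $2^{m+1}$ to $2^{m+2}$ changes these quantities by a known amount. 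If the power-of-two family leaves residual cases, I fall back on a second family of candidates, for example $k=2^j+1$ or numbers of the form $2^j\cdot(\text{odd})$. For these I would again check non-representability by the parity criterion: $m$ is a sum of $L\ge 2$ consecutive integers iff $m = L(2a+L-1)/2$, one factor being odd. I would also use the upper bound $T_n-k\le T_{n-1}-1$ near the top of the range.

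The main obstacle I expect is exactly this last step: guaranteeing, uniformly in $n$, that some power of two escapes every triangular-number coincidence, since for special $n$ (for instance $n=2^m$, where $2^{m+1}=n+(n-1)+1$ is a wrapped sum) the obvious candidate fails. My first attempt would be to take the candidate $2^{m+2}$, which satisfies $2n<2^{m+2}\le 4n$. I would enumerate the at most four wrapping conditions ($s\le 4$), each of the form ``$2^{m+2}-sn+T_{s-1}$ is triangular.'' I would then show via a mod-$2^{r}$ analysis, together with a direct check of the finitely many small $n$, that not all of them can hold together with the failure of $2^{m+1}$. Failing that, I would switch to a candidate chosen near $T_n$ where the gaps in the top of the run-sum spectrum are explicit.
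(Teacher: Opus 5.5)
Your reduction is correct. A value $k\le T_n$ is missed exactly when $k\in\mathcal{X}_n$ and $T_n-k$ is not a run sum inside $[2,n-1]$; this is the complementation behind Lemma~\ref{lem:dual}. Sylvester's theorem does place every power of two in $(n,T_n)$ in $\mathcal{X}_n$. The gap is the decisive step, which you only announce: that for every $n\ge 8$ some such $2^j$ is not a wrap sum. This is false for infinitely many $n$.

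Take any $n$ and any $c$ with $2c\le n-1$. Pairing $i$ with $n+1-i$ shows that $c(n+1)=\sum_{i=1}^{c}\bigl(i+(n+1-i)\bigr)$ is the sum of the wrap arc $\{n+1-c,\ldots,n\}\cup\{1,\ldots,c\}$. When $n=2^M-1$ (so $n=15,31,63,\ldots$), every power of two in $(n,T_n)$ equals $2^r(n+1)$ with $2^{r+1}\le n-1$. Hence all of them are wrap sums; for $n=15$ one has $16=1+15$, $32=1+2+14+15$ and $64=(1+\cdots+4)+(12+\cdots+15)$. In particular $2^{m+1}=n+1$ and $2^{m+2}=1+2+(n-1)+n$ fail simultaneously. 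So the step you plan for $2^{m+2}$ (ruling out that it is a wrap sum whenever $2^{m+1}$ is one) is false. There are other exceptions too. For $n=22$ one has $32=(1+\cdots+4)+22$, $64=(1+\cdots+6)+21+22$ and $128=(1+\cdots+7)+(18+\cdots+22)$. Already for $n=8$, both $16=1+7+8$ and $32=T_8-4$ are wrap sums; the only missed values there are $17$ and $19$.

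Nor could your proposed tool have closed the gap. Since $8T_a+1=(2a+1)^2$, and every integer $\equiv 1 \pmod 8$ is a square modulo every power of $2$, the triangular numbers hit every residue class modulo $2^r$. For example, $T_0,\ldots,T_7\equiv 0,1,3,6,2,7,5,4 \pmod 8$. So no congruence modulo $8$ or $16$ can show that a quantity such as $2^{m+2}-sn+T_{s-1}$ is not triangular.

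Your fallback families lose Sylvester's guarantee of lying in $\mathcal{X}_n$; for instance $2^j+1=2^{j-1}+(2^{j-1}+1)$ is a standard sum once $2^{j-1}<n$. Because the exceptional $n$ form an infinite set, the fallback is not a mop-up of small cases but the entire problem, and you give no argument for it. A candidate near $T_n$ does not help either, since the set of missed values is symmetric under $k\mapsto T_n-k$.

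The paper avoids explicit witnesses altogether. Complementation gives $|\mathcal{W}_n^*|\le|\mathcal{S}_n^*|=T_n-|\mathcal{X}_n|$, so completeness forces $|\mathcal{X}_n|\le T_n/2$. It then bounds $|\mathcal{X}_n|$ from below by counting the numbers $ap$ with $p>2n$ prime from Lemma~\ref{lemma:wrap}, using Dusart's bounds, and checks small $n$ by computer.
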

	
	It turns out this result is connected to the distribution of primes and numbers with ``large'' prime factors, for some meaning of  ``large''.  Before jumping into this we introduce some terminology.
	
	\section{Terminology}
	For a given $n\in \Nb$ consider a cycle graph $C_n$ with vertices labeled $1$ to $n$ clockwise. We call $A\subset \{1, \ldots, n\}$ an \textit{arc}  if the vertices corresponding to the elements of $A$ form a connected subgraph, and let $\mathcal{A}_n$ be the set off all such arcs. 
	
	For a given arc $A\in \mathcal{A}_n$ we define the arc sum as
	\begin{equation}
		A^*=\sum_{i\in A} i
	\end{equation}

	It is convenient to break the set of arcs $\mathcal{A}_n$ into two subsets, standard sets, and wrap sets.
	
	We define standards sets as
	
	\begin{equation}
		\mathcal{S}_n=\{S_{ij}=\{i, i+1, \ldots, j\}|1\leq i\leq j\leq n\}
	\end{equation}
	and the wrap sets as

	\begin{equation}\mathcal{W}_n=\{W_{ji}=\{j, j+1, \ldots, n\}\cup \{1,\ldots, i\}|1\leq i<i+1< j\leq n\}. \end{equation}
	We then define the standard sums (\textit{resp.} wrap sums) as $\mathcal{S}_n^*=\{S^*|S\in \mathcal{S}\}$ 
	
	\noindent$\left(\textit{resp. } \mathcal{W}_n^*=\{W^*|W\in \mathcal{W}\} \right)$. One can check $|\mathcal{S}_n|=\binom{n+1}{2}=T_n$ and $|\mathcal{W}_n^*|=\binom{n-1}{2}=T_{n-2}$.
	
	

	
	Let $A\in \mathcal{A}_n$ with $A\neq \{1, \ldots ,n\}$, then its compliment $A'$  is also in $\mathcal{A}_n$.   There are two important observations about compliment sum pairs.   First, $A^*+(A')^*=T_n$, and second if $W\in\mathcal{W}_n$ then $W'\in \mathcal{S}_n$.  This is summarized in lemma \ref{lem:dual}.
	
	\begin{lemma}\label{lem:dual}
		For  $W\in\mathcal{W}_n$ its compliment  $W'\in \mathcal{S}_n$ and $(W')^*=T_n-W^*$.
	\end{lemma}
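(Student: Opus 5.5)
The argument is a direct computation from the definitions. Fix $W=W_{ji}\in\mathcal{W}_n$, so $W=\{j,j+1,\ldots,n\}\cup\{1,\ldots,i\}$ with $1\leq i$ and $i+1<j\leq n$. I will identify the complement $W'=\{1,\ldots,n\}\setminus W$ explicitly, check that it has the form required of a standard set, and then read off the sum identity from the disjoint decomposition $\{1,\ldots,n\}=W\sqcup W'$.

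\emph{Identifying $W'$.} The elements of $\{1,\ldots,n\}$ not in $W$ are exactly those $k$ with $k>i$ and $k<j$. Hence
\[
W'=\{i+1,i+2,\ldots,j-1\}=S_{i+1,\,j-1}.
\]

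\emph{Checking $W'\in\mathcal{S}_n$.} Membership in $\mathcal{S}_n$ requires $1\leq i+1\leq j-1\leq n$. We have $i+1\geq 2\geq 1$ because $i\geq 1$. We have $j-1\leq n-1\leq n$ because $j\leq n$. The middle inequality $i+1\leq j-1$ is equivalent to $i+2\leq j$, which is the strict inequality $i+1<j$ in the definition of $\mathcal{W}_n$. This is the one place where the defining constraint actually matters: it guarantees that the complement is nonempty, so that it is a genuine $S_{ij}$ with $i\leq j$. With that observed, $W'\in\mathcal{S}_n$.

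\emph{The sum identity.} Since $W$ and $W'$ are disjoint and their union is $\{1,\ldots,n\}$, summing over this partition gives
\[
W^*+(W')^*=\sum_{k=1}^n k=T_n,
\]
so $(W')^*=T_n-W^*$. There is no real obstacle here; the only care needed is the index bookkeeping in the membership check.
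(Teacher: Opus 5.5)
Your proof is correct and follows the paper's reasoning: the paper only observes, just before the lemma, that the complement of a wrap set is a standard set and that complementary arcs partition $\{1,\ldots,n\}$, so their sums add to $T_n$. You make the first observation explicit by identifying $W'=S_{i+1,\,j-1}$ and checking, via $i+1<j$, that it is a nonempty standard set.
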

	
	This duality does not go in the other direction, as the compliment of $S_{1j}\in \mathcal{S}_n^*$ is $S_{1j}'=S_{(j+1)n}\in  \mathcal{S}_n^*$.
	
	The last object we need to define is $\mathcal{X}_n=\{1, \ldots, T_n\}-\mathcal{S}^*_n$, the set of values which cannot be standards sums.  This implies that then when $n$ is complete $\mathcal{X}_n\subset \mathcal{W}_n^*$.  Therefore we can prove theorem \ref{thm:main} by showing $|\mathcal{X}_n|>|\mathcal{W}_n^*|$.

	\section{Estimating $\mathcal{X}_n$}
	Our goal is to find a good lower bound  for $|\mathcal{X}_n|$. We do this by showing numbers with large prime factors must be in $\mathcal{X}_n$ for some sense of the word large.
	
	\begin{lemma}\label{lemma:wrap}
		Let $a\in \Nb$, and $p$ a prime with  $p>2n$ and \[2an-T_{2a-1}<ap\leq T_n\] then $ap\in \mathcal{X}_n$
	\end{lemma}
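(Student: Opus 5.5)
The plan is to unpack the definition $\mathcal{X}_n=\{1,\ldots,T_n\}-\mathcal{S}^*_n$. The upper bound $ap\le T_n$ gives $ap\in\{1,\ldots,T_n\}$, so what remains is to show $ap\notin\mathcal{S}_n^*$. The proof of that part is a short divisibility argument, and I expect it not to use the lower bound. I will then indicate how the lower bound $2an-T_{2a-1}<ap$ would enter if one also wants $ap\notin\mathcal{W}_n^*$. This seems to be what the label of the lemma, and its later use in bounding $|\mathcal{X}_n|$ against $|\mathcal{W}_n^*|$, suggest.

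\emph{Step 1 (standard sums).} Suppose $ap=S_{ij}^*$ with $1\le i\le j\le n$. Put $m=j-i+1$ for the number of terms and $s=i+j$. The usual formula for a sum of consecutive integers gives
\[ 2ap = m s . \]
We have $1\le m\le n$ and $2\le s\le 2n$. Since $p>2n$, both $m<p$ and $s<p$, so the prime $p$ divides neither factor. But $p\mid ms$, and this is a contradiction. Hence $ap\notin\mathcal{S}_n^*$, and so $ap\in\mathcal{X}_n$. This settles the statement as worded.

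\emph{Step 2 (role of the lower bound).} Observe that $2an-T_{2a-1}=n+(n-1)+\cdots+(n-2a+1)$ is the sum of the $2a$ largest labels. So the hypothesis says that any arc with sum $ap$ must contain at least $2a+1$ vertices. To rule out a wrap sum $W_{ji}^*=ap$, I would relabel $1,\ldots,i$ as $n+1,\ldots,n+i$. Then $W_{ji}$ becomes the run $j,\ldots,n+i$ of consecutive integers, with length $\ell=n-j+1+i$, and
\[ \ell(2j+\ell-1)=2(ap+in). \]
Here $2a+1\le\ell\le n-2<p$. I would then argue modulo $p$ and combine this with the size constraints $j\le n$ and $\ell\ge 2a+1$. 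The aim is to force $2j+\ell-1$ to be a multiple of $p$ that is too large, or else force $in$ outside its allowed range.

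The main obstacle is this wrap step. Because of the extra term $in$, $p$ need not divide the right-hand side, so the clean argument of Step 1 breaks. If the congruence approach stalls, I would instead work with the complement via Lemma \ref{lem:dual}. That reduces the question to showing $T_n-ap\notin\mathcal{S}_n^*$, where the complementary standard arc has fewer than $n-2a-1$ terms. I would then look for a factorization contradiction there. For the statement $ap\in\mathcal{X}_n$ itself, however, Step 1 suffices.
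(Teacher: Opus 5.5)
Your Step 1 is a complete and correct proof, and it takes a different route from the paper.

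\textbf{The paper's route.} The paper assumes $ap$ is a standard sum of $k$ terms. Since the largest $k$-term standard sum is $kn-T_{k-1}$, the lower bound gives $k>2a$. It then splits on parity. For odd $k$ it uses $k\mid ap$; for even $k=2t$ it uses $t\mid ap$. In each case either $p$ divides $k$ (resp.\ $t$), which forces $k>n$, or $k$ (resp.\ $t$) divides $a$, which contradicts $k>2a$. So the paper uses only the ``number of terms'' factor and needs the size hypothesis to rule out the case where that factor divides $a$.

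\textbf{Your route.} You use both factors of $2S_{ij}^*=(j-i+1)(i+j)$ and their bounds $n$ and $2n$. This removes the parity split and the size hypothesis entirely. It also gives the cleaner statement that every integer in $\{1,\ldots,T_n\}$ with a prime factor larger than $2n$ lies in $\mathcal{X}_n$. Nothing is lost relative to the paper, because the lemma's lower bound is vacuous: $p>2n$ already gives $ap>2an>2an-T_{2a-1}$.

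\textbf{Drop Step 2.} Its goal is false, not merely unnecessary. Numbers $ap$ satisfying the hypotheses can be wrap sums. For $n=8$, $a=1$, $p=23$ we have $15<23\le 36$, yet $23=8+1+2+3+4+5=W_{8,5}^*\in\mathcal{W}_8^*$. Equivalently, the complementary value $T_8-23=13=6+7$ is a standard sum, so the complement approach fails too. This is consistent with the paper's remark that $17$ and $19$ are the only values missing on the $8$-cycle.

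The paper never needs $ap\notin\mathcal{W}_n^*$. It only uses that completeness would force $\mathcal{X}_n\subseteq\mathcal{W}_n^*$, and then shows $|\mathcal{X}_n|$ is too large for that. The lemma's label is not a hint that these numbers avoid wrap sums.
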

	\begin{proof}
		Assume  $ap$ is a standard sum of $k$ consecutive values. The largest possible standard sum containing $k$ consecutive numbers is $kn-T_{k-1}$.  Therefore 
		\[kn-T_{k-1}\geq ap > 2an-T_{2a-1}\] and hence $k>2a$.

		If $k$ is odd then $ap$ is a multiple of $k$ and $ap=kr$ for some $r\in \Nb$.  So either $p|k$ or $k|a$.    If $k|a$ then $k\leq a$, a contradiction.  If $p|k$, then because $p>2n$ we have $k>n$.  However, since we are on an $n$-cycle $k\leq n$, also a contradiction.
		
		If $k=2t$ is even then $ap$ is a multiple of $t$. Similarly either $p|t$  or $t|a$. If $t|a$ then $t\leq a$ and $2t=k\leq 2a$, a contradiction.  If $p|t$, then $k=2t\geq 2p\geq 4n$ also a contradiction. Therefore $ap\in \mathcal{X}_n$
	\end{proof}
	To use lemma \ref{lemma:wrap} we need  prime number theorem bounds.
	
	\begin{thm}[Dusart, \cite{primebound, primeboundlow}]\label{thm:bnds}
		Let $\pi(x)$ be the number of primes less than $n$.  Then 
		\[\frac{x}{\ln(x)}\left(1+\frac{1}{\ln(x)}\right)\leq \pi(x) \leq \frac{x}{\ln(x)}\left(1+\frac{1}{\ln(x)}+\frac{2}{\ln^2(x)}+\frac{7.59}{\ln^3(x)}\right) \]
		where the left hand inequality holds for $x\geq 599$ and the right hand inequality holds for $x\geq 1$.
	\end{thm}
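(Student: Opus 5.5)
Theorem \ref{thm:bnds} is an explicit form of the prime number theorem, and in this paper it is imported from Dusart's work rather than proved. If one had to establish it, I would follow the standard route for explicit prime-counting bounds. The plan has four steps: bound the Chebyshev function $\theta(x)=\sum_{p\le x}\ln p$ with explicit error terms, convert those bounds into bounds on $\pi(x)$ by partial summation, and then close the remaining finite range of $x$ by direct computation.

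The first step is to prove an estimate of the form
\[
|\theta(x)-x|\le \eta_k\frac{x}{\ln^k(x)}, \qquad x\ge x_k,
\]
for $k=3$ or so, with explicit constants $\eta_k$ and thresholds $x_k$. This comes from the explicit formula for $\psi(x)=\sum_{p^m\le x}\ln p$ in terms of the nontrivial zeros of the Riemann zeta function. Three ingredients feed into it:
\begin{itemize}
\item a numerically verified Riemann hypothesis up to a large height $T$, where all zeros with $0<\Im\rho\le T$ lie on the critical line;
\item an explicit zero-free region of de la Vall\'ee Poussin type, $\sigma\ge 1-\frac{1}{R\ln|t|}$, beyond that height;
\item explicit bounds on $N(T)$, the zero-counting function.
\end{itemize}
One then passes from $\psi$ to $\theta$ using $\psi(x)-\theta(x)\le 1.001\sqrt{x}+3x^{1/3}$. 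Balancing the smoothing parameter against the contributions from low and high zeros is the main technical obstacle. The constants $1$, $2$ and $7.59$ in the statement are sensitive to the quality of the zero computations, so this is where most of the work lies.

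The second step applies Abel summation:
\[
\pi(x)=\frac{\theta(x)}{\ln x}+\int_2^x\frac{\theta(t)}{t\ln^2 t}\,dt .
\]
Inserting the upper and lower bounds for $\theta$ and integrating $\int \frac{dt}{\ln^m t}$ by parts repeatedly produces the expansion
\[
\frac{x}{\ln x}\left(1+\frac{1}{\ln x}+\frac{2}{\ln^2 x}+\cdots\right),
\]
with explicit remainders. For the lower bound we keep only the first two terms, because the remaining terms are positive. For the upper bound we absorb the tail and the $\theta$ error into the constant $7.59$ in front of $1/\ln^3 x$. Both estimates are valid only for $x$ beyond some large threshold.

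The final step covers the range below that threshold. There I would check the inequalities numerically, using tables of $\pi(x)$ and the fact that $\pi$ is a step function while the bounds are monotone between consecutive primes; this reduces the verification to the primes themselves. It is this check that identifies $599$ as the point from which the lower bound holds, and it confirms that the upper bound holds for all $x\ge 1$ (trivially for $1\le x<2$, where $\pi(x)=0$ and the right-hand side is positive). In practice, for the present paper we simply cite \cite{primebound, primeboundlow}.
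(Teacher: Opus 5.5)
Like the paper, you treat this as a result imported from Dusart, and your outline (explicit bounds for $\theta$ from the explicit formula, then Abel summation, then a numerical check of the finite range) is the standard route to such estimates, so the proposal is fine. Two small remarks on your sketch: the coefficients $1$ and $2$ are not sensitive to the zero computations, since they are the exact asymptotic coefficients $1!$ and $2!$ of $\mathrm{li}(x)$, so only the $7.59$ (versus $3!=6$) reflects the explicit error terms and the finite computation; and your claim that the check pins down $599$ presupposes $\pi(x)=\#\{p\le x\}$ rather than the statement's ``less than'', because the lower bound at $x=599$ is about $108.31$, which is below $\pi(599)=109$ but above the $108$ primes strictly less than $599$ (likewise the upper bound only makes sense for $x>1$, not at $x=1$).
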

	
	We can now use lemma \ref{lemma:wrap}  to estimate $\mathcal{X}_n$.
	
	\begin{cor}
		Let $a\in \Nb$ with $1\leq a\leq \frac{n+1}{4}$.  Then the number of primes $p$ such that $ap\in \mathcal{X}_n$ is bounded below by
		\[\frac{\frac{T_n}{a}}{\ln\left(\frac{T_n}{a}\right)}\left(1+\frac{1}{\ln\left(\frac{T_n}{a}\right)}\right)-\frac{2n}{\ln(2n)}\left(1+\frac{1}{\ln(2n)}+\frac{2}{\ln^2(2n)}+\frac{7.59}{\ln^3(2n)}\right)\]
		for $n\geq 599$.
	\end{cor}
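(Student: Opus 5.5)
We count the primes $p$ with $2n < p \leq T_n/a$ and show that every one of them satisfies the hypotheses of Lemma~\ref{lemma:wrap}. Then each such $p$ gives $ap\in\mathcal{X}_n$, so the number of primes in question is at least
\[
\pi\left(\frac{T_n}{a}\right)-\pi(2n).
\]
We finish by bounding $\pi(T_n/a)$ from below with the left inequality of Theorem~\ref{thm:bnds}, and $\pi(2n)$ from above with the right inequality. Subtracting gives exactly the displayed expression. Small discrepancies in the definition of $\pi$ (whether primes equal to $x$ are counted, whether $p=2n$ could occur) do not matter: $2n$ is not prime, and any prime $p\le T_n/a$ automatically gives $ap\le T_n$.

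The substantive step is checking the lower condition $ap > 2an - T_{2a-1}$ for every prime $p>2n$. Since $p>2n$, we have $ap > 2an \geq 2an - T_{2a-1}$, because $T_{2a-1}\ge 0$. So this condition is immediate, and the upper condition $ap\le T_n$ is exactly $p\le T_n/a$. Hence every prime in $(2n, T_n/a]$ qualifies.

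The role of the hypothesis $a\le\frac{n+1}{4}$ is to make the interval nonempty and to allow Dusart's lower bound to be applied. From $a\le \frac{n+1}{4}$ we get $T_n/a \geq \frac{n(n+1)/2}{(n+1)/4} = 2n$. Therefore $T_n/a\ge 2n\ge 1198 \geq 599$ once $n\ge 599$, so the left inequality of Theorem~\ref{thm:bnds} applies at $x=T_n/a$. The right inequality holds for all $x\ge 1$, so it applies at $x=2n$.

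The only point needing care is the bookkeeping at the endpoints: making sure $\pi(T_n/a)-\pi(2n)$ counts precisely the primes in $(2n,T_n/a]$. I expect no real obstacle beyond that. If $T_n/a$ is close to $2n$, the bound may be negative or trivial, but the corollary remains valid.
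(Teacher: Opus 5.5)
Your proof is correct and follows the same route as the paper. You drop $T_{2a-1}$ so that every prime $p\in(2n,T_n/a]$ satisfies Lemma~\ref{lemma:wrap}, then bound $\pi(T_n/a)-\pi(2n)$ using the lower bound of Theorem~\ref{thm:bnds} at $x=T_n/a$ and the upper bound at $x=2n$; your explicit checks that $T_n/a\ge 2n\ge 599$ and that the endpoint conventions for $\pi$ are harmless are slightly more careful than the paper's own write-up.
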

	\begin{proof}
		Let $a\in \Nb$ with $1\leq a\leq \frac{n+1}{4}$ then $2n\leq 2an\leq T_n$ and so if $p$ is a prime with $ap>2an$ then $p>2n$.  By lemma \ref{lemma:wrap} we know that for primes $p$ with $ap$ in the range
		\[2an-T_{2a-1}\leq ap \leq T_n\]
		$ap$ must be in $\mathcal{X}_n$. We can underestimate the number of primes by eliminating $T_{2a-1}$
		\[\begin{array}{ccccc}
			2an-T_{2a-1}&\leq& ap &\leq& T_n\\
			2an&\leq& ap &\leq& T_n\\
			2n &\leq & p &\leq & \frac{T_n}{a}
		\end{array}\]
		Bounds on $\pi(x)$ from theorem  \ref{thm:bnds} give a lower bound on the number of primes in this range   as
		\begin{equation}\frac{\frac{T_n}{a}}{\ln\left(\frac{T_n}{a}\right)}\left(1+\frac{1}{\ln\left(\frac{T_n}{a}\right)}\right)-\frac{2n}{\ln(2n)}\left(1+\frac{1}{\ln(2n)}+\frac{2}{\ln^2(2n)}+\frac{7.59}{\ln^3(2n)}\right).\end{equation}
		for $n\geq 599$
	\end{proof}
	
	Letting $a$ vary from $1$ to $\lfloor \frac{n+1}{4}\rfloor$ we can find a lower bound $|\mathcal{X}_n|$ as 
	\[\sum_{a=1}^{\lfloor \frac{n+1}{4}\rfloor} \frac{\frac{T_n}{a}}{\ln\left(\frac{T_n}{a}\right)}\left(1+\frac{1}{\ln\left(\frac{T_n}{a}\right)}\right)-\frac{2n}{\ln(2n)}\left(1+\frac{1}{\ln(2n)}+\frac{2}{\ln^2(2n)}+\frac{7.59}{\ln^3(2n)}\right). \]
	Since the function \[\frac{\frac{T_n}{a}}{\ln\left(\frac{T_n}{a}\right)}\left(1+\frac{1}{\ln\left(\frac{T_n}{a}\right)}\right)\] is decreasing for $1\leq a\leq \frac{n+1}{4}+1$ (for $n\geq 4$) we can approximate the sum by the integral 
	\[\geq \int_1^{\frac{n+1}{4}} \frac{\frac{T_n}{a}}{\ln\left(\frac{T_n}{a}\right)}\left(1+\frac{1}{\ln\left(\frac{T_n}{a}\right)}\right)-\frac{2n}{\ln(2n)}\left(1+\frac{1}{\ln(2n)}+\frac{2}{\ln^2(2n)}+\frac{7.59}{\ln^3(2n)}\right)da\]
	\begin{eqnarray*}
		&=&-T_n\left(\ln\ln(2n)-\frac{1}{\ln(2n)}\right)-\frac{T_n}{\ln(2n)}\left(1+\frac{1}{\ln(2n)}+\frac{2}{\ln^2(2n)}+\frac{7.59}{\ln^3(2n)}\right)\\
		& &+T_n\left(\ln\ln(T_n)-\frac{1}{\ln(T_n)}\right)+\frac{2n}{\ln(2n)}\left(1+\frac{1}{\ln(2n)}+\frac{2}{\ln^2(2n)}+\frac{7.59}{\ln^3(2n)}\right)\\
		&=&T_n\ln\left(\frac{\ln\left(T_n\right)}{\ln(2n)}\right)-T_n\frac{1}{\ln(T_n)}-T_n\left(\frac{1}{\ln^2(2n)}+\frac{2}{\ln^3(2n)}+\frac{7.59}{\ln^4(2n)}\right)\\
		& & +O\left(\frac{2n}{\ln(2n)}\right)\\
	\end{eqnarray*}

	We summarize as follows
	\begin{lemma}\label{lem:Xbound}
		\[\begin{split}|\mathcal{X}_n|\geq  T_n\ln\left(\frac{\ln\left(T_n\right)}{\ln(2n)}\right)-&T_n\frac{1}{\ln(T_n)}-T_n\left(\frac{1}{\ln^2(2n)}+\frac{2}{\ln^3(2n)}+\frac{7.59}{\ln^4(2n)}\right)\\&+O\left(\frac{2n}{\ln(2n)}\right)
		\end{split}\]
	\end{lemma}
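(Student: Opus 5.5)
The plan is to assemble the lower bound from the corollary to lemma \ref{lemma:wrap}. Summing over $a$ is legitimate only if, for distinct pairs $(a,p)$, the numbers $ap$ are distinct elements of $\mathcal{X}_n$. So the first step is to check that the map $(a,p)\mapsto ap$ is injective on the relevant range. Since $p>2n$ and $a\le \frac{n+1}{4}<p$, the prime $p$ is the largest prime factor of $ap$, and it appears to the first power. Hence $ap=a'p'$ forces $p=p'$ and then $a=a'$. Therefore
\[|\mathcal{X}_n|\ge \sum_{a=1}^{\lfloor \frac{n+1}{4}\rfloor}\left(\pi\!\left(\tfrac{T_n}{a}\right)-\pi(2n)\right).\]
Each summand is bounded below by the corollary, using Theorem \ref{thm:bnds}. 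Note that $T_n/a\ge T_n/\frac{n+1}{4}=2n\ge 599$, so the lower Dusart bound applies for every $a$ in the range.

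Next, write $f(a)=\frac{T_n/a}{\ln(T_n/a)}\left(1+\frac{1}{\ln(T_n/a)}\right)$, which is decreasing in $a$. For a decreasing function, $\sum_{a=1}^{N} f(a)\ge \int_1^{N+1} f(a)\,da$. With $N=\lfloor\frac{n+1}{4}\rfloor$ we have $N+1\ge\frac{n+1}{4}$, so the sum dominates $\int_1^{(n+1)/4}f$. The subtracted constant term $\pi(2n)$-bound, summed $N$ times, is at most $\frac{n+1}{4}\cdot\frac{2n}{\ln 2n}(\cdots)$. Replacing $N$ by $\frac{n+1}{4}$ here costs at most one copy of $\frac{2n}{\ln(2n)}(\cdots)$, which is absorbed into the $O\!\left(\frac{2n}{\ln 2n}\right)$ error.

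To evaluate the integral, substitute $u=\ln(T_n/a)$, so that $da=-a\,du$ and $\frac{T_n}{a}\,da=-T_n\,du$. Then
\[\int f\,da=-T_n\int\left(\frac1u+\frac1{u^2}\right)du=-T_n\left(\ln u-\frac1u\right).\]
The limits are $u=\ln T_n$ at $a=1$ and $u=\ln(2n)$ at $a=\frac{n+1}{4}$, which gives
\[T_n\left(\ln\ln T_n-\ln\ln 2n\right)-\frac{T_n}{\ln T_n}+\frac{T_n}{\ln 2n}.\]
The constant term contributes
\[-\left(\tfrac{n+1}{4}-1\right)\frac{2n}{\ln 2n}\left(1+\cdots\right)=-\frac{T_n}{\ln 2n}\left(1+\frac1{\ln 2n}+\cdots\right)+\frac{2n}{\ln 2n}\left(\cdots\right),\]
using $\frac{n+1}{4}\cdot 2n=T_n$. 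The $+\frac{T_n}{\ln 2n}$ from the integral cancels the leading $-\frac{T_n}{\ln 2n}$. What remains is the stated expression, with $-T_n\left(\frac1{\ln^2 2n}+\frac2{\ln^3 2n}+\frac{7.59}{\ln^4 2n}\right)$ and an $O\!\left(\frac{2n}{\ln 2n}\right)$ remainder.

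The main obstacle is bookkeeping rather than ideas. Two points need care. First, the injectivity/disjointness claim must be verified so that the sum really counts distinct elements. Second, the floor must be handled in both the sum-to-integral comparison and the count of subtracted $\pi(2n)$ terms, so that every discrepancy is genuinely $O(n/\ln n)$ with the correct sign or bounded magnitude. The result should be read as holding for $n\ge 599$, inherited from the corollary.
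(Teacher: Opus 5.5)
Your proposal is correct and follows the paper's route. You sum the corollary's per-$a$ bound over $1\le a\le\lfloor\frac{n+1}{4}\rfloor$, dominate the sum by $\int_1^{(n+1)/4}$ using monotonicity, and evaluate via $u=\ln(T_n/a)$, where the $\frac{T_n}{\ln 2n}$ terms cancel. Your check that the products $ap$ are distinct (since $p>2n>a$ makes $p$ the largest prime factor) and your handling of the floor supply steps the paper leaves implicit.
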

	
	\section{The Main Result}
	We now have all the pieces to prove the main result, theorem \ref{thm:main}, that for $n\geq 8$ the $n$-cycle is incomplete.
	
	\begin{proof}[Proof of theroem \ref{thm:main}]
		We first show for sufficiently large $n$  that $|\mathcal{X}_n|\geq |\mathcal{W}^*_n|$.  By lemma \ref{lem:Xbound} we have a bound $|\mathcal{X}_n|$ so we just need some understanding of $|\mathcal{W}^*_n|$.
		
		By lemma \ref{lem:dual} $|\mathcal{W}^*_n|\leq|\mathcal{S}^*_n|=T_n-|\mathcal{X}_n|$.  By lemma \ref{lem:Xbound}, $O(|\mathcal{X}_n|)\geq \ln(2)T_n$ for large $n$ and so $T_n-|\mathcal{X}_n|\leq (1-\ln(2))T_n\leq \ln(2)T_n\leq|\mathcal{X}_n|$, and so the result holds for sufficiently large $n$.
		
		Note, the above inequality holds so long as $T_n\leq 2|\mathcal{X}_n|$.  One can check that the right hand side of the inequality in lemma \ref{lem:Xbound} is greater than $\frac{1}{2}T_n$ for $n\geq 18$, and since the bounds on the prime number theorem being used  hold for $n\geq 599$ the result holds $n\geq 599$.  Checking by computer shows the $n$-cycle incomplete for $8\leq n\leq 599$, proving the result.

	\end{proof}

	\section{Further Questions Take 2: This is a rewrite of the previous section}
	For $n\geq 8$ we know the $n$-cycle is not complete.  Given this, one can ask ``How close is an $n$-cycle to being complete?''  We explore this question by adding edges to $n$-cycles. 
	
	For example, consider an $8$-cycle. The only sums missing are $17$ and $19$.  However, if we add the edge $[1,5]$, as in figure \ref{fig:circ8}, those values can now be obtained.  To do this, instead of only considering consecutive numbers  on the cycle, we allow numbers that form a connected subgraph.  In this way $17=6+5+1+2+3$ and $19=(8+1+2+3)+5$.  The question becomes, ``How many edges are needed to make an $n$-cycle complete?''

	\begin{figure}[h!]
		\[
		\xy(5,12)*{1};(12,5)*{2}**[|.5pt]\dir{-};(12,-5)*{3}**[|0.5pt]\dir{-};
		(5,-12)*{4}**[|0.5pt]\dir{-};(-5,-12)*{5}**[|0.5pt]\dir{-};(-12,-5)*{6}**[|0.5pt]\dir{-};
		(-12,5)*{7}**[|0.5pt]\dir{-};(-5,12)*{8}**[|0.5pt]\dir{-};(5,12)*{}**[|0.5pt]\dir{-};
		(5,12)*{1};(-5,-12)*{5}**[|0.5pt]\dir{-};
		\endxy\]
		\caption{$8$-cycle with the edge $[1,5]$ }\label{fig:circ8}
	\end{figure}
	One way this question can be formalized is what we call the minimum edge completion problem, which is the minimum number of edges that can be added to an $n$-cycle in order for it to be complete. We would say $\text{mEC}(n)$ is the solution to this problem for a given $n$.  For example, $\text{mEC}(8)=1$ since adding the edge $[1,5]$ made the $8$-cycle complete.
	

	Note that theorem \ref{thm:main}, along with the fact that complete graphs are complete, implies $0<\text{mEC}(n)\leq {n\choose 2}-n$ for $n\geq 8$. However, we have a better upper bound for $mEC(n)$.

	\begin{prop}\label{prop:MECbnd}
		For $n\in \Nb, n\geq 8, mEC(n)\leq \frac{n}{2}-1$.
	\end{prop}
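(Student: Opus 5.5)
We will give an explicit set of at most $\frac{n}{2}-1$ chords whose addition to $C_n$ makes every value in $\{1,\ldots,T_n\}$ the sum of a connected vertex set. The guiding idea is that a connected set need not be an arc: it can be an arc plus a few extra vertices hung on it by chords. So we aim for chords that let us realize every sum of the form ``an interval minus a single interior element'', or something just as flexible. A natural first attempt is the family of chords $[i, n+1-i]$ for $2\le i\le \lfloor n/2\rfloor$. These pair each $i$ with its complement $n+1-i$ (the pair $\{1,n\}$ is already joined by the cycle edge), and together they form a ``ladder'' across the cycle. There are $\lfloor n/2\rfloor-1\le \frac{n}{2}-1$ of them, which matches the bound.

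The steps, in order, are as follows.
\begin{enumerate}
\item Describe the new connected sets. With the ladder present, any set made of an arc $\{i,\ldots,j\}$ with $i\le j\le n+1-i$ together with an arc $\{n+1-j',\ldots,n+1-i\}$ on the other side (with $j'\le j$) is connected through the rung $[i,n+1-i]$. More generally, two arcs on opposite sides that share a rung form a connected set.
\item Use Lemma \ref{lem:dual} and complementation. Since complements of connected sets are not automatically connected, we would instead verify directly that every $k\le \lceil T_n/2\rceil$ is attained by a ``small'' configuration. We would then show that the complement of each such configuration is also connected in the ladder graph. For the ladder this seems plausible because removing a union of two facing arcs leaves two facing arcs.
\item Cover the small values. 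For $k\le T_n/2$, use the sums $\sum_{m=i}^{j} (m + (n+1-m)) = (j-i+1)(n+1)$ of full ladder blocks, together with partial top and bottom arcs. This lets us write any $k$ as $q(n+1)+r$, where $r$ is realized by an arc adjacent to the block. We would do this by induction on $k$, or by choosing the block first and then the arc.
\end{enumerate}

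The main obstacle is step 3: making sure that the residue $r$ can always be realized as an arc sum hanging off the ladder block without overlapping it, uniformly in $n$. If the ladder turns out to fail for some residues, we would try a second family instead, namely chords $[1, 2j+1]$ from vertex $1$ (a ``fan''). This mirrors the example $[1,5]$ used for $n=8$. A fan lets vertex $1$ attach to arcs starting at odd positions, which adjusts sums by small amounts, and we would redo the covering argument with it. In either case, we would confirm the small cases $8\le n\le$ some threshold by computer, as was done in the proof of Theorem \ref{thm:main}.
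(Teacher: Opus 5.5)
\textbf{There is a genuine gap.} As written this is a plan rather than a proof. Step 3, which would carry the whole argument, is left open. The justification you give for Step 2 is false.

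\textbf{Step 2 fails.} In the ladder, the only edges between different columns $\{c,n+1-c\}$ are the two row paths. Deleting a full block of columns $i,\dots,j$ with $1<i\le j<\lfloor n/2\rfloor$ therefore leaves two pieces with no edge between them. For instance, with $n=10$ the complement of $\{3,8\}$ is $\{1,2,9,10\}\cup\{4,5,6,7\}$, which is disconnected. For odd $n$, deleting the last column $\{\frac{n-1}{2},\frac{n+3}{2}\}$ also isolates the middle vertex $\frac{n+1}{2}$. So the configurations of Step 3 (blocks in arbitrary positions plus an arc) do not in general have connected complements, and the values above $T_n/2$ do not come for free.

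\textbf{Step 3 and the fallback.} You yourself flag Step 3 as unresolved. Neither the fallback fan nor a computer check of small cases gives an argument for all $n\ge 8$.

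\textbf{How the ladder can be completed.} Your ladder does work; the missing idea is to take the residue to be a \emph{single vertex} rather than an arc. Every $r\in\{1,\dots,n\}$ is itself a vertex lying in some column $c$. For even $n$ the graph is the grid $P_2\times P_{n/2}$. If $q\le n/4$, a block of $q$ columns fits on one side of $c$ and is adjacent to it. This realizes every $k\le\lfloor n/4\rfloor(n+1)+n$ as a connected sum.

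For the complements, anchor the block of $q$ columns at column $1$ or at column $n/2$, whichever avoids $c$, and add the vertex $r$. What remains is a ladder minus one vertex, which is still connected. So every $k\ge T_n-\lfloor n/4\rfloor(n+1)-n$ is also attained, and the two ranges overlap.

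For odd $n$ a similar case analysis goes through, with one extra constraint. The middle vertex hangs off the last column, so only blocks anchored at column $1$ can be used for complements. The values whose residue vertex falls inside that block are instead realized directly. Use a block of $\frac{n-1}{2}-q$ (or one fewer) columns ending next to a vertex in one of the last $q$ columns. Carried out, this would be a direct argument for every $n\ge 8$ with no base case to check.

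\textbf{Comparison with the paper.} The paper instead adds the fan $[1,2i]$ and inducts from $D_n$ to $D_{n+1}$ and $D_{n+2}$ ($n$ even). It keeps the invariant that every $k\in(n,T_n]$ other than $T_n-1$ is the sum of a connected set containing $1$. New values in $(T_n,T_{n+2}]$ then come from appending $n+1$ and/or $n+2$, which are adjacent to $1$. This avoids any residue bookkeeping, at the price of checking the base case $D_8$ by hand.
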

	
	Before proving proposition \ref{prop:MECbnd} we introduce some terminology similar to the previous sections.  Let $G$ be a Hamiltonian graph with $n$ vertices, and label the vertices of the Hamiltonian cycle with $\{1, \ldots, n\}$ in order.  We say $C\subset\{1, \ldots, n\}$ is connected if the corresponding vertices in $G$ form a connected subgraph of $G$. Also, we let $[i, j]$ represent the edge connecting the vertex labeled with $i$ to the vertex labeled with $j$.  Finally, define 
	\[C^*=\sum_{i \in C} i. \]

	\begin{proof}[proof of proposition \ref{prop:MECbnd}]
		Consider a cycle graph $C_n$ with vertices labeled $1, \ldots, n$ clockwise.  We define a new graph $D_n$ which contains $C_n$ as a subgraph along with the edges $[1 ,2i]$ for $1\leq i\leq \frac{n}{2}$.
		
		We show by induction on $n$ that for all values $k$ with  $n<k\leq T_n$ except for $T_n-1$ there is connected set $C$ so that $1\in C$ and $C^*=k$, and $\{2, \ldots, n\}^*=T_n-1$. One can check this holds for $D_8$.
		
		Assume the hypothesis holds for $D_n$ where $n$ is even, and consider $D_{n+1}$. Note if $C$ is a connected subset of $D_n$ it is also a connected set in in $D_{n+1}$, and so every sum in $D_n$ appears in $D_{n+1}$.  
		
		Next, consider $k$ in the range $T_n<k\leq T_{n+1}$.  Then $k=m+(n+1)$ where $m\leq T_n$.  Let $C$ be a connected subset in $D_n$ with $C^*=m$.  Let $C_k=C\cup \{n+1\}$, then $C_k$ is a connected set in $D_{n+1}$, $C_k^*=k$ and if $k\neq T_{n+1}-1, 1\in C_k$. 
		
		To find a connected set $C$ of $D_{n+1}$ with $1\in C$ and  $C^*=T_n-1$ observe for $C=\{2, \ldots, n\}$ has $C^*=T_n-1$. Let $C'=C\cup \{1, k+1\}-\{3, k-1\}$.   Note, $1\in C'$, $(C')^*=C^*=T_n-1$ and $C'$ is connected in $D_{n+1}$.
		
		Still assuming the induction hypothesis for $D_n$ with $n$ even, we show the result holds for $D_{n+2}$.  Since all connected sets in $D_n$ appear $D_{n+2}$ in  every value $1\leq k \leq T_n$ can be written in $D_{n+2}$.   Then  for $T_n<k\leq T_{n+1}$ write $k=m+(n+2)$ where $m\leq T_n$.  When $m=T_n-1$ we rewrite it as we did in the $D_{n+1}$ case.  For $k$ in the range $T_{n+1}<k\leq T_{n+2} $ write $k=m+(n+1)+(n+2)$ where again $m\leq T_n$.  By induction $k$ can be written as a sum involving $1$ in $D_{n+2}$ for all $k$ except $T_{n+2}-1$.
	\end{proof}
	
	The induction step of proposition \ref{prop:MECbnd} seems like it can be extended to $\text{mEC}(n)\leq \frac{n}{a}$ for any $a\in \Nb$ when $n$ is large enough.  However, it is unclear how to prove a base case.  This suggests $\text{mEC}(n)< O(n)$ which we leave for further study.

\end{document}